\newcommand{\snort}{\textsc{Snort }}
\newcommand{\col}{\textsc{Col }}
\newcommand{\nogo}{\textsc{NoGo }}
\newcommand{\Snort}{\textsc{Snort}}
\newcommand{\Col}{\textsc{Col}}
\newcommand{\Nogo}{\textsc{NoGo}}
\newcommand{\SRideal}[1]{\mathcal{N}(\Delta_{#1})}
\newcommand{\facetI}[1]{\mathcal{F}(\Delta_{#1})}
\newcommand{\SRsc}[1]{\mathcal{N}(I_{#1})}
\newcommand{\facetsc}[1]{\mathcal{F}(I_{#1})}
\newcommand{\facet}[1]{\mathcal{F}(#1)}
\newcommand{\legalI}[1]{\mathcal{L}_{#1}}
\newcommand{\illegalI}[1]{\mathcal{ILL}_{#1}}
\newcommand{\auxboard}[1]{\Gamma_{#1}}
\theoremstyle{definition} \newtheorem{definition}{Definition}[section]
\theoremstyle{plain} \newtheorem{theorem}[definition]{Theorem}
\theoremstyle{plain} 
\theoremstyle{plain} \newtheorem{proposition}[definition]{Proposition}
\theoremstyle{plain} 
\theoremstyle{plain} 
\theoremstyle{definition} \newtheorem{example}[definition]{Example}
\theoremstyle{remark} 
\theoremstyle{definition} \newtheorem*{ruleset1}{Illegal Ruleset}
\theoremstyle{definition} \newtheorem*{ruleset2}{Legal Ruleset}
\DeclareRobustCommand\onedot{\futurelet\@let@token\@onedot}
\def\@onedot{\ifx\@let@token.\else.\null\fi\xspace}
\def\ie{{i.e}\onedot}
\def\etal{{et al}\onedot}
\begin{document}

\title{Games and Complexes I: Transformation via Ideals}
\author{Sara Faridi \and Svenja Huntemann \and Richard J.~Nowakowski}

\address{Dalhousie University\\
Halifax, Nova Scotia\\
B3H 3J5, Canada}
\email{faridi@mathstat.dal.ca, svenjah@mathstat.dal.ca, rjn@mathstat.dal.ca}

\keywords{Combinatorial games, simplicial complexes, square-free monomial ideal, placement game.}
\subjclass[2010]{13F55, 91A46}

\thanks{The first and third authors' research was supported in part by the Natural Sciences and Engineering Research Council of Canada.}

\begin{abstract}
Placement games are a subclass of combinatorial games which are played on graphs. We will demonstrate that one can construct simplicial complexes corresponding to a placement game, and this game could be considered as a game played on these simplicial complexes. These complexes are constructed using square-free monomials.
\end{abstract}

\maketitle

\section{Introduction}
We will demonstrate a relationship between a subclass of combinatorial games, such as \textsc{Domineering} and \Col, and algebraic structures defined on simplicial complexes. There are two relationships, one via the maximal legal positions and the other through the minimal illegal positions. We will begin by giving the necessary background, first from combinatorial game theory, then from combinatorial commutative algebra.

For a game \textit{perfect information} means that both players know which game they are playing, on which board, and the current position. No \textit{chance} means that no dice can be rolled or cards can be dealt, or any other item involving probability can be used.

\begin{definition}\label{def:game}
A \textit{combinatorial game} is a 2-player game with perfect information and no chance, where the two players are \textit{Left} and \textit{Right} (denoted by $L$ and $R$ respectively) and they do not move simultaneously. Then a game is a set $P$ of \textit{positions} with a specified starting position. \textit{Rules} determine from which position to which position the players are allowed to move. A \textit{legal position} is a position that can be reached by playing the game from the starting position (which is legal) according to the rules. Moving from position $P$ to position $Q$ is called a \textit{legal move} if both $P$ and $Q$ are legal positions and the move is allowed according to the rules. $Q$ is usually called an option of $P$.
\end{definition}

In this paper, a combinatorial game will be denoted by its name in \textsc{Small Caps}. Well known examples of combinatorial games are \textsc{Chess}, \textsc{Checkers}, \textsc{Tic-Tac-Toe}, \textsc{Go}, and \textsc{Connect Four}. Examples of games that are not combinatorial games include bridge, backgammon, poker, and Snakes and Ladders.

Although games usually have a `winning condition' associated to them, \ie rules as to which player wins, for the purposes of this paper games do not need to have a notion of winning identified.

We will assume that the board on which games are played is a graph (or can be represented as a graph). A space on a board is then equivalent to a vertex and we use the two terms interchangeably.

\begin{definition}
A \textit{strong placement game} is a combinatorial game which satisfies the following:
\begin{itemize}
\item[(i)] The starting position is the empty board.
\item[(ii)] Players place pieces on empty spaces of the board according to the rules.
\item[(iii)] Pieces are not moved or removed once placed.
\item[(iv)] The rules are such that if it is possible to reach a position through a sequence of legal moves, then any sequence of moves leading to this position consists only of legal moves.
\end{itemize}
The \textsc{Trivial} placement game on a board is the strong placement game that has no additional rules.
\end{definition}

A \textit{basic position} is a board with only one piece placed. Any position, whether legal or illegal, in a strong placement game can be decomposed into basic positions.

The concept of a placement game originates in Brown et al \cite{GamePol} where condition (iv) is replaced by the condition that if it is legal to place a piece at one point, it must have been legal at any point before. We call this type of game a `medium placement game'. A `weak placement game' is a combinatorial game that satisfies the above conditions (i) through (iii).

Note that (iv) implies that every subposition of a legal position is also legal.

Placement games were only recently defined formally by Brown \etal in \cite{GamePol}, even though several placement games, for example \textsc{Tic-Tac-Toe} or \textsc{Domineering}, have been known and studied for a long time. In this work, we will consider strong placement games exclusively.

Throughout this paper `placement game' refers to a strong placement game. 

Here are three more we will use as examples.

\begin{definition}
In \Snort, players may not place pieces on a vertex adjacent to a vertex containing a piece from their opponent.
\end{definition}

\begin{definition}\label{def:col}
In \Col, players may not place pieces on a vertex adjacent to a vertex containing one of their own pieces.
\end{definition}

\begin{definition}
In \Nogo, at every point in the game, for each maximal group of connected vertices of the board that contain pieces placed by the same player, one of these needs to be adjacent to an empty vertex.
\end{definition}

In these games, the pieces only occupy one vertex each, which is in fact not necessary. For example in \textsc{Crosscram} \cite{Ga74} and \textsc{Domineering} \cite{BCG04} the players' pieces occupy two adjacent vertices.

\begin{definition}
The \textit{disjunctive sum} between two positions of combinatorial games $G$ and $H$ is the position in which a player can play in one of $G$ and $H$ but not both simultaneously.
\end{definition}

Assuming implicitly that placement games are part of a disjunctive sum implies that a board might be filled with more pieces of one player than of the other. Making this assumption is very useful since in many placement games the board might `break up' into the disjunctive sum of smaller boards. 

\begin{example}
For an example, consider \col played on the path $P_7$. Then the position on the left of Figure \ref{fig:disjunctive3} is equivalent to the one in which the middle space is `deleted' (on the right), \ie it is equivalent to the disjunctive sum of the two \col positions on the right, one of which has two Right pieces but no Left pieces.
\begin{figure}[!ht]
	\begin{center}
	\begin{tikzpicture}[scale=0.5]
		\foreach \y in {0,1}{
			\draw (0,\y)--(7,\y);
			\draw (8,\y)--(11,\y);
			\draw (12,\y)--(15,\y);}
		\foreach \x in {0,1,2,3,4,5,6,7,8,9,10,11,12,13,14,15}{
			\draw (\x,0)--(\x,1);}
		\node at (0.5, 0.5) {$R$};
		\node at (2.5, 0.5) {$R$};
		\node at (3.5, 0.5) {$L$};
		\node at (4.5, 0.5) {$R$};
		\node at (6.5, 0.5) {$L$};
		\node at (8.5, 0.5) {$R$};
		\node at (10.5, 0.5) {$R$};
		\node at (12.5, 0.5) {$R$};
		\node at (14.5, 0.5) {$L$};
		\node at (7.5, 0.5) {$\cong$};
		\node at (11.5, 0.5) {$+$};
	\end{tikzpicture}
	\end{center}
	\caption{A \col Position That is the Disjunctive Sum of Two \col Positions}
	\label{fig:disjunctive3}
\end{figure}
\end{example}

For a placement game $G$ and a board $B$, let \[f_i(G, B)\] denote the number of positions with $i$ pieces played, regardless of which player the pieces belong to. If the game and board are clear from context, we shorten the notation to $f_i$.

\begin{definition}[Brown \etal \cite{GamePol}]\label{def:gamepol}
For a game $G$ played on a board $B$, the \textit{game polynomial} is defined to be
\[P_{G, B}(x)=\sum_{i=0}^k f_i(G,B)x^i.\]
$P_{G,B}(1)$ is then the total number of legal positions of the game.
\end{definition}

The motivation for game polynomials came from Farr \cite{Fa03} in 2003 where the number of end positions and some polynomials of the game \textsc{Go} were considered, and work in this area was continued by Tromp and Farneb\"ack \cite{TF07} in 2007 and by Farr and Schmidt \cite{FS08} in 2008. Even though \textsc{Go} is not a placement game since pieces are removed, it shares many properties with this class of games. Thus it was natural for the authors of \cite{GamePol} to consider the concept of game polynomials for placement games.

We will now introduce concepts from combinatorial commutative algebra that we will need to construct simplicial complexes equivalent to placement games.

\begin{definition}
A \textit{simplicial complex} $\Delta$ on a finite vertex set $V$ is a set of subsets (called \textit{faces}) of $V$ with the conditions that if $A\in \Delta$ and $B\subseteq A$, then $B\in \Delta$. The \textit{facets} of a simplicial complex $\Delta$ are the maximal faces of $\Delta$ with respect to inclusion. A \textit{non-face} of a simplicial complex $\Delta$ is a subset of its vertices that is not a face. The \textit{$f$-vector} $(f_0, f_1, \ldots, f_k)$ of a simplicial complex $\Delta$ enumerates the number of faces $f_i$ with $i$ vertices. Note that if $\Delta\neq\emptyset$, then $f_0=1$.
\end{definition}

In the algebraic literature, the $f$-vector of a complex is usually indexed from $-1$ to $k-1$ as this is the ``dimension'' of the face (the number of vertices minus 1). Due to the connection between placement games and simplicial complexes, we have chosen the combinatorial indexing.

Recall that an \textit{ideal} $I$ of a ring $R=R(+,\cdot)$ is a subset of $R$ such that $(I,+)$ is a subgroup of $R$ and $rI\subseteq I$ for all $r\in R$.

Let $k$ be a field and $R=k[x_1,\ldots,x_n]$ a polynomial ring. Given a simplicial complex $\Delta$ on $n$ vertices, we can label each vertex with an integer from $1$ to $n$. Each face $F$ (resp.~non-face $N$) of $\Delta$ can then be represented by a square-free monomial of $R$ by including $x_i$ in the monomial representing the face $F$ (resp.~the non-face $N$) if and only if the vertex $i$ belongs to $F$ (resp.~$N$). We then have the following (see \cite{BH93} and \cite{Fa02} for more information):

\begin{definition}\label{def:SRFideal}
The \textit{facet ideal} of a simplicial complex $\Delta$, denoted by $\facetI{}$, is the ideal generated by the monomials representing the facets of $\Delta$. The \textit{Stanley-Reisner ideal} of a simplicial complex $\Delta$, denoted by $\SRideal{}$, is the ideal generated by the monomials representing the minimal non-faces of $\Delta$.
\end{definition}

\begin{definition}\label{def:SRFcomplex}
The \textit{facet complex} of a square-free monomial ideal $I$, denoted by $\facetsc{}$, is the simplicial complex whose facets are represented by the square-free monomials generating $I$. The \textit{Stanley-Reisner complex} of a square-free monomial ideal $I$, denoted by $\SRsc{}$, is the simplicial complex whose faces are represented by the square-free monomials not in $I$.
\end{definition}

To clarify these concepts, we will give two examples:
\begin{example}
Consider the simplicial complex $\Delta$ in Figure \ref{fig:SRfacetex1} with the labeling of the vertices as given.

\begin{figure}[!ht]
\begin{center}
\begin{tikzpicture}[scale=1.5]
	\draw[line width=1.3] (0,0)--(2,0);
	\draw[line width=1.3] (0,1.5)--(1,2)--(2,1.5);
	\filldraw[fill=gray!70, line width=1.3] (0,1.5)--(0,0)--(1,0.75)--cycle;
	\filldraw[fill=gray!70, line width=1.3] (2,1.5)--(2,0)--(1,0.75)--cycle;
	\filldraw (0,0) circle (0.066cm);
	\filldraw (2,0) circle (0.066cm);
	\filldraw (0,1.5) circle (0.066cm);
	\filldraw (1,2) circle (0.066cm);
	\filldraw (2,1.5) circle (0.066cm);
	\filldraw (1,0.75) circle (0.066cm);
	\draw (-0.35,0) node {$3$};
	\draw (-0.35,1.5) node {$2$};
	\draw (2.35,0) node {$5$};
	\draw (2.35,1.5) node {$6$};
	\draw (1,0.5) node {$4$};
	\draw (1,1.75) node {$1$};
\end{tikzpicture}
\end{center}
\caption{An Example of a Simplicial Complex}
\label{fig:SRfacetex1}
\end{figure}

The facet ideal of $\Delta$ then is
\[\facetI{}=\langle x_1x_2, x_1x_6, x_2x_3x_4, x_3x_5, x_4x_5x_6\rangle,\]
and the Stanley-Reisner ideal of $\Delta$ is
\[\SRideal{}=\langle x_1x_3, x_1x_4, x_1x_5, x_2x_5, x_2x_6, x_3x_4x_5, x_3x_6\rangle.\]
\end{example}

\begin{example}
Consider the square-free monomial ideal $I=\langle x_1x_3, x_2x_3x_4\rangle$. The facet complex $\facetsc{}$ is given in Figure \ref{fig:SRfacetex2f} and the Stanley-Reisner complex $\SRsc{}$ is given in Figure \ref{fig:SRfacetex2SR}.

\begin{figure}[!ht]
\begin{center}
\begin{tikzpicture}
	\draw[line width=2] (0,0)--(0,2);
	\filldraw[fill=gray!70, line width=2] (0,0)--(2,0)--(2,2)--cycle;
	\filldraw (0,0) circle (0.1cm);
	\filldraw (2,0) circle (0.1cm);
	\filldraw (0,2) circle (0.1cm);
	\filldraw (2,2) circle (0.1cm);
	\draw (-0.35,0) node {$3$};
	\draw (-0.35,2) node {$1$};
	\draw (2.35,0) node {$2$};
	\draw (2.35,2) node {$4$};
\end{tikzpicture}
\end{center}
\caption{Facet Complex of $I=\langle x_1x_3, x_2x_3x_4\rangle$}
\label{fig:SRfacetex2f}
\end{figure}

\begin{figure}[!ht]
\begin{center}
\begin{tikzpicture}
	\filldraw[fill=gray!70, line width=2] (-2,0)--(0,0)--(0,-2)--cycle;
	\draw[line width=2] (0,0)--(2,0);
	\draw[line width=2] (2,0)--(0,-2);
	\filldraw (0,0) circle (0.1cm);
	\filldraw (2,0) circle (0.1cm);
	\filldraw (0,-2) circle (0.1cm);
	\filldraw (-2,0) circle (0.1cm);
	\draw (-2,0.5) node {$1$};
	\draw (0,0.5) node {$2$};
	\draw (2,0.5) node {$3$};
	\draw (0,-2.5) node {$4$};
\end{tikzpicture}
\end{center}
\caption{Stanley-Reisner Complex of $I=\langle x_1x_3, x_2x_3x_4\rangle$}
\label{fig:SRfacetex2SR}
\end{figure}
\end{example}

It is clear that the facet operators are inverses of each other, i.e. $\mathcal{F}(\facetI{})=\Delta$ and $\mathcal{F}(\facetsc{})=I$, from their definitions. This is also true of the Stanley-Reisner operators: A minimal non-face of $\SRsc{}$ is a minimal monomial generator of $I$, thus a generator of $I$, showing $\mathcal{N}(\SRsc{})=I$. Similarly, since $\SRideal{}$ contains all monomials representing non-faces, a square-free monomial not in $\SRideal{}$ has to be a face of $\Delta$, thus $\mathcal{N}(\SRideal{})=\Delta$.

This shows that both the facet and the Stanley-Reisner operators give a bijection between the set of all square-free monomial ideals in $n$ variables and the set of all simplicial complexes on $n$ vertices.

\section{Constructing Monomials and Simplicial Complexes from Placement Games}
We will now introduce a construction that associates a set of monomials and a simplicial complex to each placement game.

Given a placement game $G$ on a board $B$, we can construct a set of square-free monomials in the following way: First, label the basic positions by $1,2,\ldots,n$. For each legal position we then create a square-free monomial by including $x_i$ if Left has played in position $i$ and $y_j$ if Right has placed in position $j$. The empty position (before anyone has started playing) is represented by $1$.

\begin{example}\label{ex:Col2}
Consider \col played on the path $P_3$. We label the basic positions, in this case the spaces of the board, as given in Figure \ref{fig:ColP3label}.

\begin{figure}[!ht]
	\begin{center}
	\begin{tikzpicture}[scale=0.5]
		\draw (0, 0) -- (3, 0) -- (3, -1) -- (0, -1) -- (0, 0);
		\draw (1, 0) -- (1, -1);
		\draw (2, 0) -- (2, -1);
		\draw (0.5, -0.5) node {1};
		\draw (1.5, -0.5) node {2};
		\draw (2.5, -0.5) node {3};
	\end{tikzpicture}
	\end{center}
	\caption{Labeling $P_3$}
	\label{fig:ColP3label}
\end{figure}

The maximum legal positions and their corresponding monomials are given in Figure \ref{fig:ColP3maxpos}.

\begin{figure}[!ht]
	\begin{center}
	\begin{tikzpicture}[scale=0.5]
		\draw (0, 0) -- (3, 0) -- (3, -1) -- (0, -1) -- (0, 0);
		\draw (1, 0) -- (1, -1);
		\draw (2, 0) -- (2, -1);
		\draw (0.5, -0.5) node {$L$};
		\draw (1.5, -0.5) node {$R$};
		\draw (2.5, -0.5) node {$L$};
		\draw (5,-0.5) node {$x_1y_2x_3$};
		\draw (8, 0) -- (11, 0) -- (11, -1) -- (8, -1) -- (8, 0);
		\draw (9, 0) -- (9, -1);
		\draw (10, 0) -- (10, -1);
		\draw (8.5, -0.5) node {$L$};
		\draw (10.5, -0.5) node {$R$};
		\draw (13,-0.5) node {$x_1y_3$};
		\draw (0, -2) -- (3, -2) -- (3, -3) -- (0, -3) -- (0, -2);
		\draw (1, -2) -- (1, -3);
		\draw (2, -2) -- (2, -3);
		\draw (0.5, -2.5) node {$R$};
		\draw (1.5, -2.5) node {$L$};
		\draw (2.5, -2.5) node {$R$};
		\draw (5,-2.5) node {$y_1x_2y_3$};
		\draw (8, -2) -- (11, -2) -- (11, -3) -- (8, -3) -- (8, -2);
		\draw (9, -2) -- (9, -3);
		\draw (10, -2) -- (10, -3);
		\draw (8.5, -2.5) node {$R$};
		\draw (10.5, -2.5) node {$L$};
		\draw (13,-2.5) node {$y_1x_3$};
	\end{tikzpicture}
	\end{center}
	\caption{Maximum Legal Positions for \col on $P_3$}
	\label{fig:ColP3maxpos}
\end{figure}
\end{example}

Using these monomials, we can build a simplicial complex $\Delta_{G,B}$ on the vertex set $V=\{x_1, \ldots, x_n, y_1, \ldots, y_o\}$ by letting a subset $F$ of $V$ be a face if and only if there exists a square-free monomial $m$ representing a legal position such that each element of $F$ divides $m$.

\begin{definition}\label{def:gamecomplex}
A simplicial complex that can be constructed from a placement game $G$ on a board $B$ in this way is called a \textit{legal complex} and is denoted by $\Delta_{G,B}$.
\end{definition}

\begin{example}\label{ex:ColP3}
Consider \col played on the path $P_3$. Using the notation from Example \ref{ex:Col2}, we get the legal complex $\Delta_{\Col, P_3}$ as given in Figure \ref{fig:ColP3sc}.

\begin{figure}[!ht]
	\begin{center}
	\begin{tikzpicture}[scale=1]
		\filldraw[fill=gray!70, line width=2pt, draw=black] (0, 0)--(1.73, 1)--(1.73, -1)-- cycle;
		\filldraw[fill=gray!70, line width=2pt, draw=black] (5.46, 0)--(3.73, 1)--(3.73, -1)-- cycle;
		\draw[line width=2pt] (1.73, 1)--(3.73, 1);
		\draw[line width=2pt] (1.73, -1)--(3.73, -1);
		\filldraw (0,0) circle (0.1cm);
		\filldraw (1.73,1) circle (0.1cm);
		\filldraw (1.73,-1) circle (0.1cm);
		\filldraw (5.46,0) circle (0.1cm);
		\filldraw (3.73,1) circle (0.1cm);
		\filldraw (3.73,-1) circle (0.1cm);
		\draw (-0.7,0) node {$y_2$};
		\draw (1.73,1.4) node {$x_1$};
		\draw (1.73,-1.5) node {$x_3$};
		\draw (5.98,0) node {$x_2$};
		\draw (3.73,1.4) node {$y_3$};
		\draw (3.73,-1.5) node {$y_1$};
	\end{tikzpicture}
	\end{center}
	\caption{The Legal Complex $\Delta_{\Col, P_3}$}
	\label{fig:ColP3sc}
\end{figure}
\end{example}

Observe that the maximum legal positions of a game, \ie the positions in which no piece can be placed by either Left or Right (so the game ends), correspond to the facets of $\Delta_{G,B}$ and thus uniquely determine $\Delta_{G,B}$.

In game theoretic terms, the $f$-vector of a legal complex $\Delta_{G,B}$ indicates that there are $f_i$ legal positions with $i$ pieces in the game $G$, regardless if pieces belong to Left or to Right. Thus for placement games the entries of the $f$-vector of the legal complex $\Delta_{G,B}$ are the coefficients of the game polynomial $P_{G, B}$. Therefore we have the following:
\begin{proposition}
\begin{align*}
	f_i(G,B)&=\text{number of legal positions in }G\text{ with }i\text{ pieces played on }B,\\
	&=\text{number of degree }i\text{ monomials representing legal positions in }G,\\
	&=\text{number of faces with }i\text{ vertices in }\Delta_{G,B},
\end{align*}
and we can use any of these concepts to find $f_i$.
\end{proposition}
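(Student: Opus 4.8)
The plan is to prove the two non-trivial equalities separately, treating the first line as the definition of $f_i(G,B)$ supplied by Definition~\ref{def:gamepol}. The whole statement then reduces to exhibiting two bijections: one between legal positions with $i$ pieces and degree-$i$ monomials representing legal positions, and one between those monomials and the faces of $\Delta_{G,B}$ having $i$ vertices.

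For the first bijection, I would appeal directly to the monomial construction preceding Definition~\ref{def:gamecomplex}. Each legal position is assigned the square-free monomial obtained by recording $x_j$ for each space where Left has played and $y_k$ for each space where Right has played. Placing one piece therefore contributes exactly one variable, so a position with $i$ pieces corresponds to a monomial of degree $i$. This assignment is injective because the monomial records both the occupied spaces and which player occupies each of them, which is precisely the data determining the position; it is surjective onto the set of legal-position monomials by the definition of that set. Hence the count in line one equals the count in line two.

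For the second bijection, I would send a degree-$i$ monomial $m$ representing a legal position to the set $F_m$ of its $i$ variables, and conversely send a face $F=\{v_1,\dots,v_i\}$ to the square-free monomial $v_1\cdots v_i$. The forward map lands in $\Delta_{G,B}$ immediately: by Definition~\ref{def:gamecomplex} a subset is a face as soon as some legal-position monomial is divisible by all of its elements, and $m$ itself serves as that witness for $F_m$. The content lies in the reverse map — one must check that $v_1\cdots v_i$ genuinely represents a legal position, not merely a subset of the variables of some legal position.

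This last point is the main obstacle, and it is exactly where the strong-placement hypothesis is needed. Since $F$ is a face, there is a legal-position monomial $m$ divisible by each $v_j$, so $v_1\cdots v_i$ divides $m$ and represents a subposition of the legal position encoded by $m$. Condition~(iv) in the definition of a strong placement game guarantees — as recorded in the observation that every subposition of a legal position is legal — that this subposition is itself legal. Therefore $v_1\cdots v_i$ is a legal-position monomial of degree $i$, the two maps are mutually inverse, and the count in line two equals the count in line three. Chaining the two bijections completes the argument.
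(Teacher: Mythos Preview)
Your proof is correct and follows the same line as the paper, which in fact offers no formal proof at all: the proposition is stated immediately after the informal paragraph observing that the $f$-vector of $\Delta_{G,B}$ counts legal positions by number of pieces. Your argument is strictly more careful than the paper's, in particular by isolating exactly where condition~(iv) of the strong-placement definition is needed to ensure that every face of $\Delta_{G,B}$, and not just a subset of some legal-position monomial's variables, corresponds to a legal position in its own right.
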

This also justifies using the same notation for the coefficients of a game polynomial as for entries of a $f$-vector.

We now give three more examples for the construction of monomials and simplicial complexes.

\begin{example}
The cycle $C_3$ is labelled as in Figure \ref{fig:ColC3label}.
            
\begin{figure}[!ht]
	\begin{center}
	\begin{tikzpicture}[scale=0.5]
		\draw[line width=2pt, fill=black] (0,0) circle (0.15cm) -- (1,-2) circle (0.15cm) -- (-1,-2) circle (0.15cm)-- (0,0);
		\draw (0.7, 0) node {1};
		\draw (1.7, -2) node {2};
		\draw (-1.7, -2) node {3};
	\end{tikzpicture}
	\end{center}
	\caption{Labeling $C_3$}
	\label{fig:ColC3label}
\end{figure}

Now consider \col on $C_3$. The monomials corresponding to the maximum legal positions are 
\[\{x_1y_2, x_1y_3, x_2y_3, y_1x_2, y_1x_3, y_2x_3\}.\]

Also consider \snort played on $P_3$ and $C_3$. The maximum monomials then are \[\{x_1x_2x_3, y_1y_2y_3, x_1y_3, x_3y_1\}\]
and \[\{x_1x_2x_3, y_1y_2y_3\}\] respectively.

The legal complexes of all three games are given in Figure \ref{fig:ColSnortP3C3sc}.

\begin{figure}[!ht]
	\begin{center}
	\begin{tabular}{>{\flushright\arraybackslash} m{1.2cm}| >{\centering\arraybackslash} m{4.5cm} >{\centering\arraybackslash} m{4.5cm}}
		& $P_3$ & $C_3$\\\hline
		\snort &
		\begin{tikzpicture}[scale=0.6]
		\filldraw[fill=gray!70, line width=2pt, draw=black] (0, 0)--(1.73, 1)--(1.73, -1)-- cycle;
		\filldraw[fill=gray!70, line width=2pt, draw=black] (5.46, 0)--(3.73, 1)--(3.73, -1)-- cycle;
		\draw[line width=2pt] (1.73, 1)--(3.73, 1);
		\draw[line width=2pt] (1.73, -1)--(3.73, -1);
		\filldraw (0,0) circle (0.1cm);
		\filldraw (1.73,1) circle (0.1cm);
		\filldraw (1.73,-1) circle (0.1cm);
		\filldraw (5.46,0) circle (0.1cm);
		\filldraw (3.73,1) circle (0.1cm);
		\filldraw (3.73,-1) circle (0.1cm);
		\draw (-0.7,0) node {$x_2$};
		\draw (1.73,1.4) node {$x_1$};
		\draw (1.73,-1.5) node {$x_3$};
		\draw (5.98,0) node {$y_2$};
		\draw (3.73,1.4) node {$y_3$};
		\draw (3.73,-1.5) node {$y_1$};
		\end{tikzpicture} &
		\begin{tikzpicture}[scale=0.6]
		\filldraw[fill=gray!70, line width=2pt, draw=black] (0, 0)--(1.73, 1)--(1.73, -1)-- cycle;
		\filldraw[fill=gray!70, line width=2pt, draw=black] (5.46, 0)--(3.73, 1)--(3.73, -1)-- cycle;
		\filldraw (0,0) circle (0.1cm);
		\filldraw (1.73,1) circle (0.1cm);
		\filldraw (1.73,-1) circle (0.1cm);
		\filldraw (5.46,0) circle (0.1cm);
		\filldraw (3.73,1) circle (0.1cm);
		\filldraw (3.73,-1) circle (0.1cm);
		\draw (-0.7,0) node {$x_2$};
		\draw (1.73,1.4) node {$x_1$};
		\draw (1.73,-1.5) node {$x_3$};
		\draw (5.98,0) node {$y_2$};
		\draw (3.73,1.4) node {$y_1$};
		\draw (3.73,-1.5) node {$y_3$};
		\end{tikzpicture} \\
		\col &
		\begin{tikzpicture}[scale=0.6]
		\filldraw[fill=gray!70, line width=2pt, draw=black] (0, 0)--(1.73, 1)--(1.73, -1)-- cycle;
		\filldraw[fill=gray!70, line width=2pt, draw=black] (5.46, 0)--(3.73, 1)--(3.73, -1)-- cycle;
		\draw[line width=2pt] (1.73, 1)--(3.73, 1);
		\draw[line width=2pt] (1.73, -1)--(3.73, -1);
		\filldraw (0,0) circle (0.1cm);
		\filldraw (1.73,1) circle (0.1cm);
		\filldraw (1.73,-1) circle (0.1cm);
		\filldraw (5.46,0) circle (0.1cm);
		\filldraw (3.73,1) circle (0.1cm);
		\filldraw (3.73,-1) circle (0.1cm);
		\draw (-0.7,0) node {$y_2$};
		\draw (1.73,1.4) node {$x_1$};
		\draw (1.73,-1.5) node {$x_3$};
		\draw (5.98,0) node {$x_2$};
		\draw (3.73,1.4) node {$y_3$};
		\draw (3.73,-1.5) node {$y_1$};
		\end{tikzpicture} & 
		\begin{tikzpicture}[scale=0.6]
		\draw[line width=2pt] (0,0)--(2,2);
		\draw[line width=2pt] (2,2)--(4,0);
		\draw[line width=2pt] (4,0)--(0,2);
		\draw[line width=2pt] (0,2)--(2,0);
		\draw[line width=2pt] (2,0)--(4,2);
		\draw[line width=2pt] (4,2)--(0,0);
		\filldraw (0,0) circle (0.1cm);
		\filldraw (2,0) circle (0.1cm);
		\filldraw (4,0) circle (0.1cm);
		\filldraw (0,2) circle (0.1cm);
		\filldraw (2,2) circle (0.1cm);
		\filldraw (4,2) circle (0.1cm);
		\draw (0,-0.5) node {$y_1$};
		\draw (2,-0.5) node {$y_2$};
		\draw (4,-0.5) node {$y_3$};
		\draw (0,2.5) node {$x_1$};
		\draw (2,2.5) node {$x_2$};
		\draw (4,2.5) node {$x_3$};
		\end{tikzpicture}\\	
	\end{tabular}
	\end{center}
	\caption{The Legal Complexes $\Delta_{\Snort, P_3}$, $\Delta_{\Snort, C_3}$, $\Delta_{\Col, P_3}$, and $\Delta_{\Col, C_3}$}
	\label{fig:ColSnortP3C3sc}
\end{figure}

\end{example}

Note that the legal complexes of \col and \snort on $P_3$ are isomorphic. This is true whenever \col and \snort are played on a bipartite graph, see \cite{MSc}.

\section{The Ideals of a Placement Game}\label{sec:ideals}

Through the monomials that represent legal or illegal positions of a game, we can also associate square-free monomial ideals with a placement game.

\begin{definition}\label{def:legalI}
The \textit{legal ideal}, $\legalI{G,B}$, of a placement game $G$ played on the board $B$ is the ideal generated by the monomials representing maximal legal positions of $G$.
\end{definition}

\begin{definition}\label{def:illegalI}
The \textit{illegal ideal}, $\illegalI{G,B}$, of a placement game $G$ played on the board $B$ is the ideal generated by the monomials representing minimal illegal positions of $G$.
\end{definition}

\begin{definition}\label{def:auxboard}
The \textit{illegal complex}, sometimes called the \textit{auxiliary board} \cite{GamePol}, of a placement game $G$ played on the board $B$, is the simplicial complex whose facets are represented by the monomials of the minimal illegal positions of $G$. It is denoted by $\auxboard{G,B}$.
\end{definition}

The authors in \cite{GamePol} introduce the auxiliary board for ``independence placement games'', which is the class of placement games for which the illegal complex is a graph. The term `independence game' was chosen since the independence sets of $\Gamma_{G,B}$ (considered as a graph) correspond to the legal positions of $G$ played on $B$, \ie the faces of $\Delta_{G,B}$.

\begin{proposition}\label{thm:legalillegal}
For a placement game $G$ played on a board $B$ we have the following
\begin{itemize}
	\item[(1)] $\legalI{G,B}=\facetI{G,B},$
	\item[(2)] $\illegalI{G,B}=\facet{\auxboard{G,B}}=\SRideal{G,B}.$
\end{itemize}
\end{proposition}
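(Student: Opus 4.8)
The plan is to reduce both parts to a single structural fact: \emph{the faces of $\Delta_{G,B}$ are exactly the monomials representing legal positions of $G$}. First I would prove this claim. Write $m_F=\prod_{v\in F}v$ for the square-free monomial attached to $F\subseteq V$. By the construction of $\Delta_{G,B}$, $F$ is a face precisely when $m_F\mid m$ for some monomial $m$ representing a legal position. The backward implication is immediate: if $m_F$ already represents a legal position, take $m=m_F$. For the forward implication I would invoke condition (iv) of a strong placement game, together with the remark that every subposition of a legal position is legal. If $m_F\mid m$ with $m$ legal, then $m_F$ divides a valid configuration, hence represents a subposition of a legal position, and is therefore legal itself. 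Thus $F$ is a face if and only if $m_F$ is a legal monomial.

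Granting this, part (1) is short. The facets of $\Delta_{G,B}$ are its maximal faces, which under the correspondence are exactly the maximal legal positions, \ie the game-ending positions noted after Example~\ref{ex:ColP3}. By Definition~\ref{def:SRFideal}, $\facetI{G,B}$ is generated by the monomials of these facets, while by Definition~\ref{def:legalI}, $\legalI{G,B}$ is generated by the monomials of the maximal legal positions. These two generating sets are literally the same, so the ideals coincide.

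For part (2) I would first dispatch the equality $\illegalI{G,B}=\facet{\auxboard{G,B}}$, which is essentially definitional: by Definition~\ref{def:auxboard} the facets of $\auxboard{G,B}$ are represented by the monomials of the minimal illegal positions, so its facet ideal $\facet{\auxboard{G,B}}$ and the illegal ideal $\illegalI{G,B}$ share the same generating set. The remaining equality $\illegalI{G,B}=\SRideal{G,B}$ again uses the structural fact: since faces correspond to legal monomials, the non-faces of $\Delta_{G,B}$ correspond exactly to the square-free monomials that are not legal, \ie the illegal positions, and hence the minimal non-faces correspond to the minimal illegal positions. By Definition~\ref{def:SRFideal}, $\SRideal{G,B}$ is generated by the minimal-non-face monomials, so $\SRideal{G,B}=\illegalI{G,B}$.

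The one place that requires genuine care, and the step I expect to be the main obstacle, is the bookkeeping for monomials divisible by some $x_iy_i$ (two pieces on a single space). Such a monomial can never divide a legal monomial, since a legal position carries at most one piece per space; hence every such monomial is a non-face, and the minimal ones are exactly the degree-two monomials $x_iy_i$. To make the minimal-non-face/minimal-illegal-position correspondence an honest bijection, I would fix the convention that an ``illegal position'' is any square-free monomial in the $x_i,y_j$ that does not represent a legal position, so that the $x_iy_i$ are themselves counted among the minimal illegal positions. With that convention the two generating sets in part (2) match exactly, and the proposition follows.
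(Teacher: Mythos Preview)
Your proof is correct and follows essentially the same approach as the paper, which likewise reduces both parts to the identification ``faces of $\Delta_{G,B}$ $\leftrightarrow$ legal positions'' and then reads off the equalities from the definitions. You are more explicit than the paper in justifying that identification via condition (iv) and in handling the $x_iy_i$ monomials; the paper simply asserts these points without elaboration.
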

\begin{proof}
(1) The facets of $\Delta_{G,B}$ represent the maximal legal positions of $G$. Thus $\facetI{G,B}$ is the ideal generated by the monomials representing the maximal legal positions, which is $\legalI{G,B}$ by definition.

(2) The facets of $\auxboard{G,B}$ are represented by the monomials of the minimal illegal positions of $G$, which by definition generate $\illegalI{G,B}$, proving the first equality.

Since the faces of $\Delta_{G,B}$ represent the legal positions of $G$, the minimal non-faces of $\Delta_{G,B}$ represent the minimal illegal positions, which generate $\illegalI{G,B}$. Thus $\illegalI{G,B}=\SRideal{G,B}$.
\end{proof}

\begin{example}
	Consider \col played on the path $P_3$ with labels as in Example \ref{ex:Col2}. We then have the legal ideal
	 \[\legalI{\Col, P_3}=\langle x_1y_2x_3, y_1x_2y_3, x_1y_3, y_1x_3\rangle\]
	 and the illegal ideal 
	 \[\illegalI{\Col, P_3}=\langle x_1x_2, x_2x_3, y_1y_2, y_2y_3\rangle.\]
	The illegal complex $\auxboard{\Col, P_3}$ is given in Figure \ref{fig:ColP3Aux}.
	
	\begin{figure}[!ht]
	\begin{center}
	\begin{tikzpicture}[scale=2]
		\node (1) at (0,0) {$x_1$};
		\node (2) at (1,0) {$x_2$};
		\node (3) at (2,0) {$x_3$};
		\node (4) at (0,-1) {$y_1$};
		\node (5) at (1,-1) {$y_2$};
		\node (6) at (2,-1) {$y_3$};
		\draw (1)--(2)--(3)--(6)--(5)--(4)--(1);
		\draw (2)--(5);
	\end{tikzpicture}
	\end{center}
	\caption{The Illegal Complex $\auxboard{\Col, P_3}$}
	\label{fig:ColP3Aux}
\end{figure}
\end{example}

\section{Playing Games on Simplicial Complexes}
In this section we show that games can be played on the illegal or legal complex rather than the board.

Since the facets of the illegal complex represent the minimal illegal positions, we can play on $\auxboard{G,B}$, instead of playing $G$ on the board $B$, according to the following rules:
\begin{ruleset1}\label{rules:illegal}

\begin{enumerate}
\item Left may only play on vertices labelled $x_i$, while Right may only play on vertices labelled $y_i$.
\item Given a facet, pieces played may not occupy all the vertices of the facet.
\end{enumerate}
\end{ruleset1}

Since the facets of $\auxboard{G,B}$ are the minimal illegal positions, any vertex set that does not contain all the vertices of any facet is a legal position of $G$. Thus playing on $\auxboard{G,B}$ according to the above rules results in legal positions.

\begin{example}
	Consider \col played on $P_5$. Since pieces may not be placed on the same space, or pieces by the same player placed side by side, the facets of $\auxboard{\Col, P_5}$ then consist of the edges between $x_i$ and $y_i$, between $x_i$ and $x_{i+1}$, and between $y_i$ and $y_{i+1}$. It is given in Figure \ref{fig:ColP5Aux}.
	
\begin{figure}[!ht]
	\begin{center}
	\begin{tikzpicture}[scale=2]
		\node (1) at (0,0) {$x_1$};
		\node (2) at (1,0) {$x_2$};
		\node (3) at (2,0) {$x_3$};
		\node (4) at (3,0) {$x_4$};
		\node (5) at (4,0) {$x_5$};
		\node (6) at (0,-1) {$y_1$};
		\node (7) at (1,-1) {$y_2$};
		\node (8) at (2,-1) {$y_3$};
		\node (9) at (3,-1) {$y_4$};
		\node (10) at (4,-1) {$y_5$};
		\draw (1)--(2);
		\draw (2)--(3);
		\draw (3)--(4);
		\draw (4)--(5);
		\draw (5)--(10);
		\draw (6)--(7);
		\draw (7)--(8);
		\draw (8)--(9);
		\draw (9)--(10);
		\draw (6)--(1);
		\draw (2)--(7);
		\draw (3)--(8);
		\draw (4)--(9);
	\end{tikzpicture}
	\end{center}
	\caption{The Illegal Complex $\auxboard{\Col, P_5}$}
	\label{fig:ColP5Aux}
\end{figure}

Playing on the vertices $x_1, y_3, x_4, y_5$ is legal since we never have both vertices of an edge. This position is shown on the top of Figure \ref{fig:SRCcol}, while the bottom shows the corresponding position played on $P_5$.

\begin{figure}[!ht]
	\begin{center}
	\begin{tikzpicture}[scale=1]
		\node[circle, draw, minimum size=8mm] (1) at (0,0) {$L$};
		\node[circle, draw, minimum size=8mm] (2) at (2,0) {};
		\node[circle, draw, minimum size=8mm] (3) at (4,0) {};
		\node[circle, draw, minimum size=8mm] (4) at (6,0) {$L$};
		\node[circle, draw, minimum size=8mm] (5) at (8,0) {};
		\node[circle, draw, minimum size=8mm] (6) at (0,-2) {};
		\node[circle, draw, minimum size=8mm] (7) at (2,-2) {};
		\node[circle, draw, minimum size=8mm] (8) at (4,-2) {$R$};
		\node[circle, draw, minimum size=8mm] (9) at (6,-2) {};
		\node[circle, draw, minimum size=8mm] (10) at (8,-2) {$R$};
		\node  at (0,0.75) {$x_1$};
		\node  at (2,0.75) {$x_2$};
		\node  at (4,0.75) {$x_3$};
		\node  at (6,0.75) {$x_4$};
		\node  at (8,0.75) {$x_5$};
		\node  at (0,-2.75) {$y_1$};
		\node  at (2,-2.75) {$y_2$};
		\node  at (4,-2.75) {$y_3$};
		\node  at (6,-2.75) {$y_4$};
		\node  at (8,-2.75) {$y_5$};
		\draw (1)--(2);
		\draw (2)--(3);
		\draw (3)--(4);
		\draw (4)--(5);
		\draw (5)--(10);
		\draw (6)--(7);
		\draw (7)--(8);
		\draw (8)--(9);
		\draw (9)--(10);
		\draw (6)--(1);
		\draw (2)--(7);
		\draw (3)--(8);
		\draw (4)--(9);
		\node[circle, draw, minimum size=8mm] (a) at (0,-4) {$L$};
		\node[circle, draw, minimum size=8mm] (b) at (2,-4) {};
		\node[circle, draw, minimum size=8mm] (c) at (4,-4) {$R$};
		\node[circle, draw, minimum size=8mm] (d) at (6,-4) {$L$};
		\node[circle, draw, minimum size=8mm] (e) at (8,-4) {$R$};
		\draw (a)--(b);
		\draw (b)--(c);
		\draw (c)--(d);
		\draw (d)--(e);
		\node  at (0,-4.75) {$1$};
		\node  at (2,-4.75) {$2$};
		\node  at (4,-4.75) {$3$};
		\node  at (6,-4.75) {$4$};
		\node  at (8,-4.75) {$5$};
	\end{tikzpicture}
	\end{center}
	\caption{A Legal Position on $\auxboard{\Col, P_5}$ and on $P_5$}
	\label{fig:SRCcol}
\end{figure}
\end{example}

The next example of an illegal complex has a facet of cardinality 3.
\begin{example}
Consider \nogo played on the path $P_3$. The legal ideal is
\[\legalI{\Nogo,P_3}=\langle x_1x_2, x_1x_3, x_1y_3, x_2x_3, y_1x_3, y_1y_2, y_1y_3, y_2y_3\rangle\]
while the illegal ideal is
\[\illegalI{\Nogo, P_3}=\langle x_1x_2x_3, y_1y_2y_3, x_1y_1, x_1y_2, x_2y_2, x_2y_3, x_3y_3, y_1x_2, y_2x_3\rangle.\]

The illegal complex is given in Figure \ref{fig:NogoP3aux}.

\begin{figure}[!ht]
\begin{center}
\begin{tikzpicture}
	\filldraw[fill=gray!70, draw=black] (0, 1)--(1.73, 0)--(0, -1)-- cycle;
	\filldraw[fill=gray!70, draw=black] (5.46, 1)--(3.73,0)--(5.46, -1)-- cycle;
	\draw (0,1)--(5.46,1)--(1.73,0)--(5.46,-1)--(0,-1)--(3.73,0)--(0,1);
	\draw (1.73,0)--(3.73,0);
	\filldraw (0,1) circle (0.1cm);
	\filldraw (1.73,0) circle (0.1cm);
	\filldraw (0,-1) circle (0.1cm);
	\filldraw (5.46,1) circle (0.1cm);
	\filldraw (3.73,0) circle (0.1cm);
	\filldraw (5.46,-1) circle (0.1cm);
	\draw (-0.5,1) node {$x_1$};
	\draw (1.2,0) node {$x_2$};
	\draw (-0.5,-1) node {$x_3$};
	\draw (5.96,1) node {$y_1$};
	\draw (4.26,0) node {$y_2$};
	\draw (5.96,-1) node {$y_3$};
\end{tikzpicture}
\end{center}
\caption{The Illegal Complex $\auxboard{\Nogo, P_3}$}
\label{fig:NogoP3aux}
\end{figure}
\noindent Then playing on $x_1$ and $x_2$ is legal (they form a face, but not a facet), while playing on $x_1, x_2$, and $x_3$ is illegal.
\end{example}

Similarly, playing on the legal complex $\Delta_{G,B}$ according to the following rules is also equivalent to playing $G$ on $B$:
\begin{ruleset2}\label{rules:legal}
\begin{enumerate}
	\item Left may only play on vertices labelled $x_i$, while Right may only play on vertices labelled $y_i$.
	\item The set of occupied vertices needs to be a face of $\Delta_{G,B}$.
\end{enumerate}
\end{ruleset2}

\begin{example}
Consider \col played on $C_3$. The position on the left in Figure \ref{fig:gclegal} is legal, while the one on the right is illegal when playing on the complex.

\begin{figure}[!ht]
	\begin{center}
	\begin{tikzpicture}[scale=1]
		\node[circle, draw, minimum size=8mm] (1) at (0,2) {};
		\node[circle, draw, minimum size=8mm] (2) at (2,2) {$L$};
		\node[circle, draw, minimum size=8mm] (3) at (4,2) {};
		\node[circle, draw, minimum size=8mm] (4) at (0,0) {};
		\node[circle, draw, minimum size=8mm] (5) at (2,0) {};
		\node[circle, draw, minimum size=8mm] (6) at (4,0) {$R$};
		\draw (4)--(2)--(6)--(1)--(5)--(3)--(4);
		\draw (0,-0.75) node {$y_1$};
		\draw (2,-0.75) node {$y_2$};
		\draw (4,-0.75) node {$y_3$};
		\draw (0,2.75) node {$x_1$};
		\draw (2,2.75) node {$x_2$};
		\draw (4,2.75) node {$x_3$};
		\node[circle, draw, minimum size=8mm] (a) at (6,2) {};
		\node[circle, draw, minimum size=8mm] (b) at (8,2) {$L$};
		\node[circle, draw, minimum size=8mm] (c) at (10,2) {};
		\node[circle, draw, minimum size=8mm] (d) at (6,0) {$R$};
		\node[circle, draw, minimum size=8mm] (e) at (8,0) {};
		\node[circle, draw, minimum size=8mm] (f) at (10,0) {$R$};
		\draw (d)--(b)--(f)--(a)--(e)--(c)--(d);
		\draw (6,-0.75) node {$y_1$};
		\draw (8,-0.75) node {$y_2$};
		\draw (10,-0.75) node {$y_3$};
		\draw (6,2.75) node {$x_1$};
		\draw (8,2.75) node {$x_2$};
		\draw (10,2.75) node {$x_3$};
		\node at (2,-1.5) {(A) Legal Position};
		\node at (8,-1.5) {(B) Illegal Position};
	\end{tikzpicture}
	\end{center}
	\caption{A Legal and an Illegal Position when Playing on $\Delta_{\Col, C_3}$}
	\label{fig:gclegal}
\end{figure}
\end{example}

Notice that both the legal complex and the illegal complex give a representation of the game \emph{and} the board. Thus, we can use the two complexes interchangeably, which is of advantage since sometimes the illegal complex is simpler than the legal complex (for example, the legal complex of \col played on $P_5$ has facets with 5 vertices, while in the illegal complex the facets have 2 vertices).

The next theorem recapitulates these discussions.
\begin{theorem}
Given a placement game $G$ played on a board $B$, there exist simplicial complexes $\Delta$ and $\Gamma$ such that $G$ is equivalent to the game with the Illegal Ruleset played on $\Gamma$, and equivalent to the game with the Legal Ruleset played on $\Delta$.
\end{theorem}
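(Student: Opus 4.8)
The plan is to take $\Delta = \Delta_{G,B}$ and $\Gamma = \auxboard{G,B}$, the legal and illegal complexes of $G$, and for each to exhibit an explicit isomorphism of games: a bijection between the legal positions of $G$ and the positions of the corresponding ruleset game that sends the empty starting position to the empty starting position and carries the legal-move (option) relation of each player onto that of the other. The natural candidate is the bijection already used throughout the construction: a legal position of $G$ in which Left occupies the basic positions indexed by $S$ and Right those indexed by $T$ is sent to the vertex set $\{x_i : i \in S\} \cup \{y_j : j \in T\}$, equivalently to its representing square-free monomial. The content of the proof is then to check that this assignment is a well-defined bijection onto the positions of each ruleset game and that it respects single moves.

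First I would treat the Legal Ruleset on $\Delta = \Delta_{G,B}$. By the construction of the legal complex (Definition~\ref{def:gamecomplex}), a vertex set is a face of $\Delta_{G,B}$ precisely when its monomial divides a monomial representing a legal position; since strong placement games satisfy condition~(iv), every subposition of a legal position is legal, so ``divides a legal monomial'' coincides with ``is itself a legal monomial.'' Hence the faces of $\Delta_{G,B}$ are exactly the vertex sets of legal positions of $G$, and the bijection lands legal positions of $G$ onto the positions allowed by rule~(2) of the Legal Ruleset. Rule~(1) matches the requirement that Left's pieces are the $x_i$ and Right's the $y_j$, so the player-labelled positions correspond.

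Next I would treat the Illegal Ruleset on $\Gamma = \auxboard{G,B}$. By Proposition~\ref{thm:legalillegal}(2) we have $\illegalI{G,B} = \SRideal{G,B}$, so the facets of $\Gamma$ (the minimal illegal positions) are exactly the minimal non-faces of $\Delta_{G,B}$. Consequently a vertex set contains no facet of $\Gamma$ if and only if it is a face of $\Delta_{G,B}$, that is, if and only if it is a legal position of $G$. Thus rule~(2) of the Illegal Ruleset cuts out precisely the legal positions of $G$, rule~(1) again matches the player labels, and the same bijection serves.

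It remains to verify move preservation, which is where the one subtle point lies. A move in $G$ places a single new piece on an empty basic position, so under the bijection it adds exactly one vertex (an $x_i$ for a Left move, a $y_j$ for a Right move); conversely every move in either ruleset game adds one such vertex. A move $P \to Q$ is legal in $G$ iff $P$ and $Q$ are both legal positions differing by one piece, and by condition~(iv) legality is a property of the position alone, independent of how it was reached. The hard part is to leverage this: one must argue that, because legality is position-determined and downward closed, ``$Q$ is obtained from the legal position $P$ by one allowed placement of $G$'' coincides with ``$Q$ is a face of $\Delta$ reached from the face $P$ by adding one vertex'' and with ``$Q$ avoids every facet of $\Gamma$ and is reached from $P$ by adding one vertex.'' Granting this, the bijection carries each player's options to options, so each game tree is isomorphic to that of the corresponding ruleset game, yielding both claimed equivalences. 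The principal obstacle is precisely this careful identification of the single-step option relation, since everything else follows from the already-established correspondence between faces, non-faces, and the legal and illegal positions.
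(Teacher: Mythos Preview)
Your proposal is correct and follows the same approach as the paper: set $\Delta=\Delta_{G,B}$ and $\Gamma=\Gamma_{G,B}$ and invoke the correspondence between legal positions and faces already established in the surrounding discussion. The paper's own proof is a one-line pointer to that discussion, whereas you spell out the bijection, the role of condition~(iv), the use of Proposition~\ref{thm:legalillegal}(2), and the single-move compatibility explicitly; this added rigor is welcome but does not constitute a different method.
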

\begin{proof}
As shown above, $\Delta=\Delta_{G,B}$ the legal complex and $\Gamma=\Gamma_{G,B}$ the illegal complex satisfy this.
\end{proof}

\section{Discussion}
From the construction of legal complexes from placement games, there are several questions that arise naturally.

One question of interest is a possible reverse construction. In other words, we are looking at what conditions a simplicial complex has to satisfy to be a legal complex. In \cite{GameCompII} we explore this question further.

Another natural direction to pursue is how the algebra of a square-free monomial ideal $I$ (such as Cohen-Macaulayness, localization/deletion-contraction) affects the rulesets of the games played on the simplicial complexes $\facetsc{}$ and $\SRsc{}$.

\end{document}